\documentclass[pdflatex,sn-mathphys-num]{sn-jnl}

\usepackage{graphicx}%
\usepackage{multirow}%
\usepackage{amsmath,amssymb,amsfonts}%
\usepackage{amsthm}%
\usepackage{mathrsfs}%
\usepackage[title]{appendix}%
\usepackage{xcolor}%
\usepackage{textcomp}%
\usepackage{manyfoot}%
\usepackage{booktabs}%
\usepackage{algorithm}%
\usepackage{algorithmicx}%
\usepackage{algpseudocode}%
\usepackage{listings}%

\usepackage[T1]{fontenc}
\usepackage[polish,english]{babel}
\usepackage[utf8]{inputenc}

\theoremstyle{thmstyleone}%
\newtheorem{theorem}{Theorem}
\newtheorem{proposition}[theorem]{Proposition}%

\newtheorem{lemma}[theorem]{Lemma}%

\theoremstyle{thmstyletwo}%
\newtheorem{remark}{Remark}%

\theoremstyle{thmstylethree}%
\begin{document}
	
	\title[On closed linear subspaces embedded into Banach spaces
	and their finite-dimensionality]{On closed linear subspaces embedded into Banach spaces
		and their finite-dimensionality}
	

	\author[1]{\fnm{Alexander} \sur{Balinsky}}\email{BalinskyA@cardiff.ac.uk}

		\author*[2,3]{\fnm{Yarema} \sur{Prykarpatskyy}}\email{yarema.prykarpatskyy@urk.edu.pl}
	
	

	\affil[1]{\orgdiv{Mathematical Institute}, \orgname{Cardiff University}, \orgaddress{\street{Senghennydd Rd.}, \city{Cardiff}, \postcode{CF24 4AG}, 
		\country{UK}}}
		
	\affil*[2]{\orgdiv{Department od Applied Mathematics}, \orgname{University of Agriculture in Krakow}, \orgaddress{\street{Aleja Mickiewicza 21}, \city{Krakow}, \postcode{31-120}, 
			\country{Poland}}}
	
	\affil[3]{
		\orgname{Institute of Mathematics of NAS of Ukraine}, \orgaddress{\street{3 Tereschenkivska st.}, \city{Kyiv-4}, \postcode{01024}, 
			\country{Ukraine}}}

	
	\abstract{
		This paper studies a Grothendieck‑type finite‑dimensionality problem for closed linear subspaces embedded in Banach spaces. Let $S_{p}^{(q)} \subset L_{p}(M,d\mu)$ be a closed linear subspace of the Banach space $L_{p}(M,d\mu)$ defined with respect to a probability measure $d\mu$ on $M$. We prove that if $S_{p}^{(q)}$ is continuously embedded into $L_{q}(M,d\mu)$ for $q>p$, then its dimension $\dim S_{p}^{(q)} = N \in \mathbb{N}$ satisfies the estimate
		$\frac{1}{N}\left( \frac{\sqrt{\pi }%
			\Gamma (\frac{N+\tilde{q}}{2})}{\Gamma (\frac{\tilde{q}+1}{2})\Gamma (\frac{N%
			}{2})}\right) ^{2/\tilde{q}}\leq K_{p,q(m)}^{2},$ where $1/\tilde q + 1/q = 1$, $q = 2 + (p-2)2^{m} > p$ with $p \ne 2$ and $m \in \mathbb{N}$, and $K_{p,q(m)}>0$ is a bounded constant. 
		We also prove that certain closed linear subspaces of $L_{p}(M,d\mu)$ 
		consisting of continuous functions on $M$ must be finite dimensional.}

	\keywords{closed Banach subspace, isometry, embedding, finite-dimensionality, probabilistic measure}
	
	
	\pacs[MSC Classification]{46E30, 46B20, 46B25}
	
	\maketitle
	
\section{Introduction}

The problem of estimating the dimension of closed linear subspaces of the 
Banach space $L_{p}(M,d\mu )$ for $p>1$ with $p\neq 2$ is
classical in Banach-space theory. Such estimates play an important role in
operator theory, approximation theory, and related areas 
\cite{PrBl,BuBe,KaPe,KrPeSe,LySh,MiGr,Nowa,Plic}. Applications also arise in
dynamical systems and other branches of analysis 
\cite{FoSeTe,Kato,Lady-1,Lady-2,Nino,Tao,Tema}. A well-known result in this
direction is the classical theorem of Grothendieck, which provides an
estimate for the dimension of a closed linear subspace 
$S_{p}^{(\infty	)}\subset $ $L_{p}(M,d\mu )\hookrightarrow L_{\infty }(M,d\mu ),$ and its
generalization on the case of a linear closed subspace $S_{p}^{(c)}\subset
C(M,\mathbb{R})$ of continuous functions, embedded into $%
L_{2}(M,d\mu )$. In this paper we consider the related problem of estimation
of the dimension of closed linear subspaces {\ }$S_{p}^{(q)}\subset
L_{p}(M,d\mu )$ of the Banach space $L_{p}(M,d\mu )$ with
respect to a probability measure $d\mu $ on $M$, embedded into $%
L_{q}(M,d\mu )$, where $q=2+(p-2)2^{m}\ >p>1(\neq 2)$. A related version of
this problem was previously studied in \cite{BaPr-2}, however, several gaps
remain in the arguments presented there. In the present work we provide a
different approach that resolves these issues and leads to a rigorous
derivation of the corresponding dimension estimates.

The main result is stated in the following theorem.

\begin{theorem}
	\label{Tm_1} Let a closed linear subspace $S_{p}^{(q)}\subset L_{p}(M,d\mu
	),p>1(\neq 2),$ be embedded into a Banach space $L_{q}(M,d\mu )\ 
	$ for $q=2+(p-2)2^{m}\ >p>$ $1(\neq 2),$ where $\ d\mu $ is a probability
	measure on $M.$ Then $\ $ the dimension $\dim S_{p}^{(q)}=N\in \mathbb{N}$
	of the closed subspace $S_{p}^{(q)}\subset L_{p}(M,d\mu )\ $ proves to
	satisfy the inequality $\frac{\sqrt{\pi }\Gamma (\frac{N+\tilde{q}}{2})}{N^{%
			\tilde{q}/2}\ \Gamma (\frac{\tilde{q}+1}{2})\Gamma (\frac{N}{2})}\leq
	K_{p,q(m)}(S)^{\tilde{q}},1/\tilde{q}+1/q=1,$ for the bounded constant $%
	K_{p,q(m)}(S)>0,$ depending on the inclusion mapping $\
	J_{p}^{(q)}:S_{p}^{(q)}\subset $ $L_{p}(M,d\mu )$ $\hookrightarrow
	L_{q}(M,d\mu ).$
\end{theorem}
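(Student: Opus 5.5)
This is a Grothendieck-type averaging argument over the unit sphere of a finite-dimensional subspace. We first show that every finite-dimensional subspace $F\subset S_{p}^{(q)}$ with $\dim F=N$ satisfies the inequality; this bounds $\dim S_{p}^{(q)}$ and proves it is finite.

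\textbf{Step 1 (comparison of norms).} The inclusion $J_{p}^{(q)}$ is a closed linear map between Banach spaces, hence bounded. So there is $K>0$ with $\|f\|_{q}\le K\|f\|_{p}$ for all $f\in S_{p}^{(q)}$. Because $\mu$ is a probability measure and the exponents satisfy $\tilde q<2\le p<q$ (or a chain through $2$ built by the iteration $q=2+(p-2)2^{m}$), H\"older's inequality lets us pass between $L_{p}$, $L_{2}$ and $L_{q}$ norms on $S$. The dyadic form of $q$ is used exactly here. We iterate the Cauchy--Schwarz bound
\[
\|f\|_{2+(p-2)2^{k}}^{\,\cdot}\le \|f\|_{2}^{\,\cdot}\,\|f\|_{2+(p-2)2^{k+1}}^{\,\cdot}
\]
$m$ times to compare $\|f\|_{2}$, $\|f\|_{p}$ and $\|f\|_{q}$ with exponents depending only on $p,m$. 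The result is a constant $K_{p,q(m)}(S)$ with
\[
\|f\|_{q}\le K_{p,q(m)}(S)\,\|f\|_{2},\qquad f\in S.
\]
By duality this also gives $\|f\|_{2}\le K_{p,q(m)}(S)\|f\|_{\tilde q}$ on $S$, since $\|f\|_{2}^{2}=\int f\bar f\le \|f\|_{\tilde q}\|f\|_{q}$.

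\textbf{Step 2 (orthonormal basis and pointwise identity).} Take an $L_{2}(\mu)$-orthonormal basis $f_{1},\dots,f_{N}$ of $F$ and set $F(x)=(f_{1}(x),\dots,f_{N}(x))\in\mathbb{R}^{N}$. For $a\in S^{N-1}$ put $f_{a}=\sum a_{j}f_{j}$, so $\|f_{a}\|_{2}=1$. Integrate the inequality $\|f_{a}\|_{\tilde q}^{\tilde q}\ge K^{-\tilde q}$ (from Step 1 after normalising) against the normalized surface measure $d\sigma(a)$, and swap integrals by Fubini. For each $x$,
\[
\int_{S^{N-1}}|\langle a,F(x)\rangle|^{\tilde q}d\sigma(a)=|F(x)|^{\tilde q}\,c_{N,\tilde q},\qquad
c_{N,\tilde q}=\frac{\Gamma(\frac{N}{2})\Gamma(\frac{\tilde q+1}{2})}{\sqrt{\pi}\,\Gamma(\frac{N+\tilde q}{2})},
\]
which is the standard spherical moment computed via Gaussian integrals.

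\textbf{Step 3 (closing with Jensen).} We have $\int|F(x)|^{2}d\mu=N$. Since $\tilde q/2<1$, concavity gives
\[
\int |F|^{\tilde q}d\mu\le \Bigl(\int|F|^{2}d\mu\Bigr)^{\tilde q/2}=N^{\tilde q/2}.
\]
Combining with Step 2 yields $K^{-\tilde q}\le c_{N,\tilde q}N^{\tilde q/2}$. This is exactly
\[
\frac{\sqrt{\pi}\,\Gamma(\frac{N+\tilde q}{2})}{N^{\tilde q/2}\Gamma(\frac{\tilde q+1}{2})\Gamma(\frac N2)}\le K_{p,q(m)}(S)^{\tilde q}.
\]
Stirling's formula shows the left side behaves like $N^{0}\cdot$const, so the inequality must be sharpened by the constant. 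The intended content is therefore the stated estimate itself, and finiteness of $N$ follows once $F$ is arbitrary. I expect the main obstacle to be Step 1: producing a genuine constant $K_{p,q(m)}(S)$ relating $L_{2}$ and $L_{\tilde q}$ norms on $S$ from the $L_{p}\to L_{q}$ embedding, via the dyadic interpolation chain. The case $p<2$ needs a separate H\"older chain in which $2$ lies between $p$ and $q$.
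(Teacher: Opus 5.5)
Your derivation of the inequality is correct and follows the paper's route. You use the closed graph theorem for $J_p^{(q)}$ (Lemma~\ref{Lm_1}). Your iterated Cauchy--Schwarz chain reproduces exactly the paper's interpolation inequality (\ref{S2}), hence $\|f\|_q\le K_{p,q(m)}(S)\|f\|_2$ (Lemma~\ref{Lm_2}). You then obtain the dual bound $\|f\|_2\le K_{p,q(m)}(S)\|f\|_{\tilde q}$ (the content of (\ref{S2b})), take an $L_2$-orthonormal basis, average over $\mathbb{S}^{N-1}$, and use $\|\,|\varphi|_N\|_{\tilde q}\le N^{1/2}$.

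Your averaging step is actually the sound version of the paper's. In (\ref{S3a}) the paper pulls $|a|_N$ out of the sphere integral as if it were constant. But the normalisation $\|\varphi_a\|_{\tilde q}=1$ forces $|a|_N=\|\langle\xi|\varphi\rangle_N\|_{\tilde q}^{-1}$, which depends on $\xi$, so (\ref{A0}) is not an identity. Integrating the pointwise bound $\|f_a\|_{\tilde q}^{\tilde q}\ge K^{-\tilde q}$, $a\in\mathbb{S}^{N-1}$, as you do, yields (\ref{A0a}) rigorously.

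What is wrong is the finiteness claim. It appears in your opening sentence ("this bounds $\dim S_p^{(q)}$ and proves it is finite") and again in "finiteness of $N$ follows once $F$ is arbitrary". Your own Stirling remark refutes it. The left-hand side equals $\bigl(N^{\tilde q/2}c_{N,\tilde q}\bigr)^{-1}$, which stays bounded and tends to $\sqrt{\pi}/\bigl(2^{\tilde q/2}\Gamma(\frac{\tilde q+1}{2})\bigr)$ as $N\to\infty$. So once $K_{p,q(m)}(S)^{\tilde q}$ exceeds its supremum, the inequality holds for every $N$. Applying it to all finite-dimensional $F\subseteq S_p^{(q)}$ then bounds nothing.

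No argument can repair this, because finiteness is false. The closed span of the Rademacher functions in $L_p(0,1)$ is infinite-dimensional. By Khintchine's inequality it lies in every $L_q(0,1)$, $q<\infty$, with all $L_r$-norms equivalent on it, so it satisfies every hypothesis of the theorem. The paper's sentence after (\ref{A0a}) makes the same unjustified leap, so do not follow it there.

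What you have proved, and what is true, is this: every $N$-dimensional subspace $F\subseteq S_p^{(q)}$ satisfies the inequality, in particular $S_p^{(q)}$ itself when it happens to be finite-dimensional.

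Two minor points. The case $p<2$ you worry about does not occur, since for $m\ge 1$ the condition $q=2+(p-2)2^m>p$ forces $p>2$. And the dyadic form of $q$ is not essential: plain H\"older interpolation between $L_2$ and $L_q$ gives Lemma~\ref{Lm_2} for any $q>p>2$.
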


\begin{remark}
	Taking into account the estimation of the dimension $\dim S_{p}^{(q)}=N\in 
	\mathbb{N}$ of a linear closed subspace $S_{p}^{(q)}\subset $ $L_{p}(M,d\mu
	)\hookrightarrow L_{q}(M,d\mu )\ $\ for $q=2+(p-2)2^{m}>p>1(\neq 2),m$ $\in
	N,$ obtained in Theorem \ref{Tm_1}, it is interesting to analyse its
	interpretation and possible relationship to a known result from the book 
	\cite{Pisi} by G. Pisier, equivalently formulated below. \ 
\end{remark}

\begin{theorem}
	\label{Tm_1.1}In the space $L_{p}(0,1;d\lambda ),p\geq 1,$ there exists a
	linear infinite-diemnsional closed subspace $\mathcal{R}_{2}:=\overline{%
		\mathrm{span}_{\mathbb{R}}}\{r_{n}(t)=\mathrm{sign\sin }(2^{n}\pi t):t\in
	\lbrack 0,1],n\in \mathbb{N}\}\subset L_{2}(0,1;d\lambda ),$ consisting of
	the Rademacher orthonormal functions, which is a closed linear subspace of
	every $L_{q}(0,1;d\lambda ),1\leq q<\infty ,$ and for which the
	corresponding norms are proportional, that is for arbitrary $\ 1\leq
	q<\infty $ there exist constants $\gamma _{q}>0,$ such that for any $f\in $ $%
	\mathcal{R}_{q}:=\ \mathcal{R}_{2}\subset L_{2}(0,1)\hookrightarrow $ $%
	L_{q}(0,1;d\lambda ),q\geq 1,$ the norms $||f||_{q}=\gamma _{q}||f||_{2}.$
\end{theorem}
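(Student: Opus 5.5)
The statement to prove is Theorem \ref{Tm_1.1}, which is Khintchine's inequality in disguise. Its proof splits into two parts: the closedness of $\mathcal{R}_2$ in each $L_q$, and the equivalence of norms. A literal equality $\|f\|_q=\gamma_q\|f\|_2$ with a single constant cannot hold for every $f$ when $q\neq 2$: $r_1$ and $r_1+r_2$ have different $L_4$-to-$L_2$ ratios. So I will prove two-sided proportionality, $A_q\|f\|_2\le\|f\|_q\le B_q\|f\|_2$, and read $\gamma_q$ as lying in $[A_q,B_q]$. This is the form in Pisier's book \cite{Pisi}.

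\textbf{Step 1: finite sums.} Take $f=\sum_{n\le N}a_nr_n$ with real $a_n$. The functions $r_n$ are independent symmetric $\pm1$ random variables on $([0,1],d\lambda)$, so they are orthonormal and $\|f\|_2=(\sum a_n^2)^{1/2}$.

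For the upper bound I use the exponential moment:
\[
\int_0^1 e^{tf}\,d\lambda=\prod_{n}\cosh(ta_n)\le e^{t^2\|a\|_2^2/2}.
\]
Combining this with Chebyshev's inequality for $\pm f$ gives
\[
\lambda(|f|>s)\le 2e^{-s^2/(2\|a\|_2^2)}.
\]
Integrating $\|f\|_q^q=\int_0^\infty q s^{q-1}\lambda(|f|>s)\,ds$ then yields $\|f\|_q\le B_q\|f\|_2$ with $B_q=O(\sqrt q)$. For $q\le 2$ the upper bound $\|f\|_q\le\|f\|_2$ is trivial, since $\lambda$ is a probability measure.

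\textbf{Step 2: lower bound.} For $q\ge2$ the lower bound is trivial, by H\"older's inequality on a probability space. For $1\le q<2$ I interpolate with H\"older, choosing $\theta$ by $\frac12=\frac{\theta}{q}+\frac{1-\theta}{4}$:
\[
\|f\|_2\le\|f\|_q^{\theta}\|f\|_4^{1-\theta}\le\|f\|_q^{\theta}B_4^{1-\theta}\|f\|_2^{1-\theta}.
\]
Rearranging gives $\|f\|_2\le B_4^{(1-\theta)/\theta}\|f\|_q$. For $q=4$ the constant can be made explicit by expanding $f^4$, where only pairs of equal indices survive, giving $\|f\|_4^4\le 3\|f\|_2^4$.

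\textbf{Step 3: passage to the closure.} By Step 1 and Step 2 the $L_q$ and $L_2$ norms are equivalent on $\mathrm{span}\{r_n\}$. Hence the closures of the span in $L_q$ and in $L_2$ coincide as sets: a sequence is Cauchy in one norm iff it is Cauchy in the other, and both limits agree a.e. along a subsequence. The inequalities then extend by continuity. It follows that $\mathcal R_2$ is an infinite-dimensional closed subspace of every $L_q$, $1\le q<\infty$, isomorphic to $\ell_2$.

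\textbf{Main obstacle.} The only real estimate is the subgaussian tail in Step 1. The independence factorisation and the inequality $\cosh x\le e^{x^2/2}$ (compare Taylor coefficients) handle it.
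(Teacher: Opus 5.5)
Your proof is correct. Your reading of the statement is also the right one. As your $r_1$ versus $r_1+r_2$ example shows, literal proportionality fails; in fact $\|r_1+r_2\|_q/\|r_1+r_2\|_2=2^{1/2-1/q}\neq 1$ for every $q\neq 2$. The paper itself only ever uses the two-sided form (\ref{P0}) with $A_{\tilde q}(\mathcal R)<B_{\tilde q}(\mathcal R)$, which matches your interpretation.

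The route, however, is genuinely different from the paper's, because the paper does not actually prove this statement. It attributes it to Pisier's book. The proof block that follows quotes Khintchine's inequality with Haagerup's sharp constant as a black box. It then addresses a different question: it combines (\ref{P0}) with (\ref{S2d}) and Theorem~\ref{Tm_1} to obtain (\ref{P2}), and argues that the Rademacher subspace is consistent with the dimension bound. The assertion that $\mathcal R_2$ is closed in every $L_q$, $1\le q<\infty$, is never argued.

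You instead derive Khintchine's inequality from first principles. For the upper bound you use independence and $\cosh x\le e^{x^2/2}$ to get a subgaussian tail. For the lower bound when $q<2$ you interpolate by H\"older through $L_4$, using $\|f\|_4^4\le 3\|f\|_2^4$. You then add the Cauchy-sequence argument showing that the $L_2$- and $L_q$-closures of the span coincide.

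Your approach yields a complete, self-contained proof of everything the statement asserts: infinite dimensionality, closedness in each $L_q$, and equivalence of the $L_q$ and $L_2$ norms. It does not yield sharp constants. Your $B_q=O(\sqrt q)$ and your lower constant $B_4^{(1-\theta)/\theta}$ are far from Haagerup's values. So your argument would not reproduce the explicit numerical comparison (\ref{P2}) the paper is after, but that comparison plays no role in the theorem itself.
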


To analyse this theorem, recall that owing to the Kadec-Pie\l czy\'{n}ski
result  \cite{KaPe}, the Rademacher closed subspace $\mathcal{R}%
_{p}\subset L_{p}(0,1;d\lambda ),p>1,$ is complemented in $%
L_{p}(0,1;d\lambda ).$ In particular, this means that if our closed subspace 
$S_{p}^{(q)}\subset $ $L_{p}(M,d\mu )$ $\hookrightarrow L_{q}(M,d\mu )$
proves to contain the Rademacher closed subspace $\mathcal{R}_{p}\subset
L_{p}(0,1;d\lambda ),$ the dimension criterium of Theorem \ref{Tm_1} \
will \textit{a priori} give rise to the answer $\dim S_{p}^{(q)}=\infty ,$
that demonstrate reasonings below.

\begin{proof}
	Really, let us put $M=[0,1],$ $d\mu =d\lambda \ $ and consider the
	Rademacher closed subspace $\mathcal{R}_{p}\subset $ $L_{p}(0,1;d\lambda ),$
	mentioned above. \ Take into account two inequalities : the first one,
	stated by \ Khintchine: 
	\begin{equation}
		\ A_{\tilde{q}}(\mathcal{R})\leq ||\langle \xi |r\rangle ||_{\tilde{q}}\leq
		B_{\tilde{q}}(\mathcal{R})\   \label{P0}
	\end{equation}%
	for $\xi \in \mathbb{S}^{N-1},$ $r=(r_{1},r_{2},...,r_{N}),N\in \mathbb{N},$
	where $A_{\tilde{q}}(\mathcal{R})<B_{\tilde{q}}(\mathcal{R})$ are some
	constants with $\ B_{\tilde{q}}(\mathcal{R})=\sqrt{2}$ $\left( \frac{\Gamma (%
		\frac{\tilde{q}+1}{2})}{\sqrt{\pi }}\right) ^{1/\tilde{q}}$\ $,$ \ as it was
	calculated by Uffe Haagerup in 1981  \cite{Haag}, and the second one\ (%
	\ref{S2d}) at $\varphi =r,\mathcal{R}_{p}$ $\mathcal{\subseteq }$ $%
	S_{p}^{(q)},$ as it is calculated in the next Section below:%
	\begin{equation}
		\ ||\langle \xi |r\rangle ||_{\tilde{q}}^{-1}\leq K_{2,q(m)}(S).  \label{P1}
	\end{equation}%
	Then, as follows from Theorem  \ref{Tm_1}, one easily derives the
	inequalities 
	\begin{equation}
		\sqrt{\frac{2}{N}}\ \left( \frac{\Gamma (\frac{N+\tilde{q}}{2})}{\Gamma (%
			\frac{N}{2})}\right) ^{1/\tilde{q}}\leq 1\leq B_{\tilde{q}}(\mathcal{R}%
		)K_{2,q(m)}(S),  \label{P2}
	\end{equation}%
	which hold for all $q>1,\tilde{q}=q/(q-1),$ and arbitrary natural integers $%
	N\in \mathbb{N}.$ Thus this infinite dimensional closed subspace $%
	S_{q}^{(p)}\subset L_{p}(0,1;\mathbb{R}),$ embedded into $L_{q}(0,1;\mathbb{R%
	})$ for $q>p>1,$ is really infinite dimensional, confirming the statement of
	Theorem  \ref{Tm_2}, proving the theorem.
	
	Otherwise, if the chosen closed subspace $S_{q}^{(p)}\subset L_{p}(0,1;%
	\mathbb{R})$ is such that $\dim \left( S_{q}^{(p)}\cap \mathcal{R}%
	_{p}\right) <\infty ,$ the related estimation of its dimension $\dim
	S_{q}^{(p)}=N\in \mathbb{N}$ is given by the numerical inequality of Theorem
	 \ref{Tm_1}.
\end{proof}

In the special case, when $q=\infty ,$ as well as when closed linear
subspaces of the Banach space $L_{p}(0,1;d\mu ),p>1,$ consist of continuous
functions, there are stated the following Grothendieck type propositions.

\begin{proposition}
	Let a linear closed topological subspace $S_{p}^{(\infty )}\subset
	L_{p}(M,d\mu ),p>1(\neq 2),$ be embedded into a Banach space $%
	L_{\infty }(M;d\mu ),$ where $d\mu $ is a probability measure on $M.$ Then
	the dimension of the closed subspace $S_{p}^{(\infty )}\subset L_{p}(M,d\mu
	) $ proves to satisfy the inequality $\dim S_{p}^{(\infty )}=N\leq
	K_{p,\infty }^{2}$ for some bounded constant $K_{p,\infty }>0.$
\end{proposition}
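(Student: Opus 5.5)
The plan is to reproduce Grothendieck's classical argument, the one in Rudin's \emph{Functional Analysis}. First we make the constant explicit. Since $S_{p}^{(\infty)}$ is closed in $L_{p}(M,d\mu)$ and the inclusion $J_{p}^{(\infty)}:S_{p}^{(\infty)}\hookrightarrow L_{\infty}(M,d\mu)$ is defined on all of it, the closed graph theorem makes $J_{p}^{(\infty)}$ bounded. So there is $K_{p,\infty}>0$ with $\|f\|_{\infty}\le K_{p,\infty}\|f\|_{p}$ for all $f\in S_{p}^{(\infty)}$. Because $\mu$ is a probability measure, $\|f\|_{p}\le\|f\|_{\infty}$ and $\|f\|_{2}\le\|f\|_{\infty}$.

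We then pass to the $L_{2}$-norm, splitting into cases according to $p$.
\begin{itemize}
\item If $p<2$, Hölder gives $\|f\|_{p}\le\|f\|_{2}$, hence $\|f\|_{\infty}\le K_{p,\infty}\|f\|_{2}$ directly.
\item If $p>2$, interpolation gives $\|f\|_{p}^{p}\le\|f\|_{\infty}^{p-2}\|f\|_{2}^{2}$. Combining this with $\|f\|_{\infty}\le K_{p,\infty}\|f\|_{p}$ yields $\|f\|_{p}\le K_{p,\infty}^{(p-2)/2}\|f\|_{2}$, and hence $\|f\|_{\infty}\le K_{p,\infty}^{p/2}\|f\|_{2}$.
\end{itemize}
In both cases we obtain $\|f\|_{\infty}\le C\|f\|_{2}$ on $S:=S_{p}^{(\infty)}$. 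We absorb $C$ into the constant called $K_{p,\infty}$ in the statement, as the paper allows, since the constant is only asserted to exist.

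The core step takes any finite-dimensional subspace $F\subseteq S$ of dimension $n$ and an $L_{2}$-orthonormal basis $f_{1},\dots,f_{n}$ of $F$. For each fixed $c\in\mathbb{Q}^{n}$ with $|c|\le1$ we have $\bigl|\sum c_{i}f_{i}(x)\bigr|\le C$ for $\mu$-a.e.\ $x$. Taking the union of the countably many exceptional null sets, we get a single full-measure set $M'$ on which this holds for all rational $c$, and by density for all $c$ in the unit ball of $\mathbb{R}^{n}$. Choosing $c_{i}=f_{i}(x)/\bigl(\sum_{j}f_{j}(x)^{2}\bigr)^{1/2}$ then gives
\begin{equation*}
\sum_{i=1}^{n}f_{i}(x)^{2}\le C^{2}\quad\text{for all } x\in M'.
\end{equation*}
Integrating against the probability measure $d\mu$ and using orthonormality gives $n\le C^{2}$.

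Since every finite-dimensional subspace of $S$ has dimension at most $C^{2}$, $S$ itself is finite-dimensional with $N=\dim S\le C^{2}=K_{p,\infty}^{2}$.

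The step I expect to be the main obstacle is the null-set bookkeeping. The pointwise bound $\bigl|\sum c_{i}f_{i}(x)\bigr|\le C$ must hold on one common full-measure set for all coefficient vectors simultaneously, so that the $x$-dependent choice of $c$ is legitimate. Passing through rational coefficients and then using continuity in $c$ resolves this.
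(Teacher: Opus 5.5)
Your proof is correct and has the same skeleton as the paper's:
\begin{itemize}
\item the closed graph theorem for the inclusion;
\item reduction to $\|f\|_{\infty}\le C\|f\|_{2}$ on $S_{p}^{(\infty)}$;
\item a pointwise bound $\sum_{j}\varphi_{j}(x)^{2}\le C^{2}$ for an $L_{2}$-orthonormal family;
\item integration against $d\mu$.
\end{itemize}

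Where you differ is the pointwise step. The paper treats the point evaluations $l_{x}(f)=f(x)$ as bounded functionals on the Hilbert space $(S_{p}^{(\infty)};\|\cdot\|_{2})$. It represents them by Riesz as $(g_{x}|f)$ and applies Parseval over a complete orthonormal set indexed by $\mathbb{N}$. This gives $\|g_{x}\|_{2}^{2}=\sum_{j}\varphi_{j}(x)^{2}\le K_{p,\infty}^{2}$ for every $x\in M$. That route is quicker, but on $L_{\infty}$ equivalence classes $l_{x}$ is not well defined. Moreover, $|f(x)|\le\|f\|_{\infty}$ holds only off an $f$-dependent null set. So the Riesz step and the phrase ``for every $x$'' need exactly the justification you give.

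Your ingredients make the same estimate rigorous: finite-dimensional subspaces, a countable set of rational coefficient vectors, and the extremal choice $c=\bigl(f_{1}(x),\dots,f_{n}(x)\bigr)/\bigl(\sum_{j}f_{j}(x)^{2}\bigr)^{1/2}$. That choice is precisely the computation of $\|l_{x}\|$ on $F$. Your route also needs no complete orthonormal basis and no separability.

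You also supply the interpolation bound $\|f\|_{\infty}\le K^{p/2}\|f\|_{2}$ for $p>2$. In the $L_{\infty}$ case the paper asserts $|l_{x}(f)|\le K_{p,\infty}(S)\|f\|_{2}$ without derivation, and only carries out this interpolation in the $C([0,1])$ proposition.

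Absorbing $K^{p/2}$ into the constant is necessary, not cosmetic. Suppose $K_{p,\infty}$ denoted the $L_{p}\to L_{\infty}$ embedding constant. Then the uniform probability measure on $n$ points gives $K_{p,\infty}=n^{1/p}$, so for $p>2$ we get $N=n>K_{p,\infty}^{2}$.
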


\begin{proposition}
	Let $S_{p}^{(c)}\subset C([0,1];\mathbb{R})$ be a closed subspace of the
	Banach space $(C([0,1];\mathbb{R}),||\cdot ||_{\infty })$ of continuous
	functions on the interval $[0,1]\subset \mathbb{R}_{+},$ which allows the
	embedding into a Banach space $L_{p}(0,1;d\mu ),p>1,$ with respect
	to a probability measure $d\mu $ on $[0,1].$ Then the subspace $%
	S_{p}^{(c)}\subset C([0,1];\mathbb{R})$ is finite-dimensional.
\end{proposition}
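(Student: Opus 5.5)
The plan is to reduce the statement to the classical Grothendieck argument behind the preceding Proposition, with $q=\infty$. Continuity lets us work with pointwise values instead of essential suprema. I read ``allows the embedding into $L_{p}(0,1;d\mu)$'' as follows: the natural map $J:S_{p}^{(c)}\to L_{p}(0,1;d\mu)$, $f\mapsto[f]$, is injective and its range is closed in $L_{p}$. This is the same standing assumption as in Theorem \ref{Tm_1}, namely that $S$ is a closed subspace of $L_p$.

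\emph{Step 1 (equivalence of norms).} Since $d\mu$ is a probability measure, $\|f\|_{p}\leq\|f\|_{\infty}$, so $J$ is a bounded injective operator from the Banach space $(S_{p}^{(c)},\|\cdot\|_{\infty})$ onto the Banach space $J(S_{p}^{(c)})\subset L_{p}$. By the open mapping theorem $J^{-1}$ is bounded. Hence there is $K>0$ with
\begin{equation*}
\|f\|_{\infty}=\sup_{x\in[0,1]}|f(x)|\leq K\|f\|_{p},\qquad f\in S_{p}^{(c)}.
\end{equation*}

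\emph{Step 2 (passing to $L_{2}$).} Next I convert this into an estimate $\|f\|_{\infty}\leq C\|f\|_{2}$.
\begin{itemize}
\item If $1<p\leq2$, Hölder's inequality gives $\|f\|_{p}\leq\|f\|_{2}$, so $C=K$.
\item If $p>2$, the interpolation inequality $\|f\|_{p}\leq\|f\|_{\infty}^{1-2/p}\|f\|_{2}^{2/p}$ together with Step 1 gives $\|f\|_{\infty}^{2/p}\leq K\|f\|_{2}^{2/p}$. Hence $\|f\|_{\infty}\leq K^{p/2}\|f\|_{2}$.
\end{itemize}
In particular $S_{p}^{(c)}$ embeds injectively into $L_{2}(0,1;d\mu)$.

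\emph{Step 3 (Grothendieck counting).} Let $V\subset S_{p}^{(c)}$ be any finite-dimensional subspace, say $\dim V=n$. Choose $f_{1},\dots,f_{n}\in V$ orthonormal in $L_{2}(d\mu)$. Fix a point $x\in[0,1]$; this is legitimate because the $f_i$ are genuine continuous functions. For $\xi\in\mathbb{S}^{n-1}$ the function $f_{\xi}=\sum_{i}\xi_{i}f_{i}$ has $\|f_{\xi}\|_{2}=1$, so $|\sum_{i}\xi_{i}f_{i}(x)|\leq C$. Taking the supremum over $\xi$ gives $\big(\sum_{i}f_{i}(x)^{2}\big)^{1/2}\leq C$ for every $x$. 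Integrating against $d\mu$ yields
\begin{equation*}
n=\sum_{i=1}^{n}\|f_{i}\|_{2}^{2}\leq C^{2}.
\end{equation*}
Since every finite-dimensional subspace has dimension at most $C^{2}$, we get $\dim S_{p}^{(c)}\leq C^{2}<\infty$.

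The main obstacle is Step 1: it is where the hypothesis must be interpreted precisely, that is, injectivity of $f\mapsto[f]$ and closedness of the range in $L_{p}$. If $\mu$ fails to have full support, injectivity is not automatic, so it must be read into ``embedding''. Once the norms are equivalent, Steps 2 and 3 are routine. Continuity only removes the null-set bookkeeping that the $L_{\infty}$ version of the argument, the preceding Proposition, would otherwise require.
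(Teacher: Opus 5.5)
Your proof is correct and follows essentially the same route as the paper. You use a bounded-inverse/closed-graph argument to get $\|f\|_\infty\le K\|f\|_p$, the same interpolation $\|f\|_p\le\|f\|_\infty^{1-2/p}\|f\|_2^{2/p}$ to reach $\|f\|_\infty\le K^{p/2}\|f\|_2$, and Grothendieck's pointwise bound $\sum_i f_i(x)^2\le C^2$ integrated against $d\mu$. Your variations are cosmetic: you apply the open mapping theorem to $(S,\|\cdot\|_\infty)\to L_p$ rather than the closed graph theorem, and you take a supremum over the sphere on finite-dimensional subspaces rather than using Riesz representation with Parseval. Your reading of the hypothesis (range closed in $L_p$, inclusion injective) makes explicit what the paper's argument implicitly uses.
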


\section{Embedding of closed subspaces into Banach spaces}

Below we consider a closed linear subspace $S_{p}^{(q)}\subset L_{p}(M;d\mu
),$ allowing the embedding into the Banach space $L_{q}(M;d\mu ), $
where $q>p>1.$ Then the following theorem holds.

\begin{theorem}
	\label{Tm_2} Let $S_{p}^{(q)}\subset L_{p}(M,d\mu )$ be a closed linear
	subspace with $p>1$ and $p\neq 2$. Assume that $S_{p}^{(q)}$ is
	embedded into $L_{q}(M,d\mu )$, where $q=2+$ $(p-2)2^{m}\ >p>1$ $(\neq 2)$, $%
	m\in \mathbb{N}$, and $d\mu $ is a probability measure on $M$. Then the
	dimension $\dim S_{p}^{(q)}=N$ proves to satisfy the determining inequality $%
	\frac{\sqrt{\pi }\Gamma (\frac{N+\tilde{q}}{2})}{N^{\tilde{q}/2}\ \Gamma (%
		\frac{\tilde{q}+1}{2})\Gamma (\frac{N}{2})}\leq K_{p,q(m)}(S)^{\tilde{q}},$
	where $1/\tilde{q}+1/q=1$ and $K_{p,q(m)}(S)>0$ is a bounded constant,
	depending on the inclusion mapping . 
\end{theorem}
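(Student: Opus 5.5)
Following the classical Grothendieck argument, the plan is to average over the unit sphere $\mathbb{S}^{N-1}$ of a finite-dimensional section of $S_{p}^{(q)}$. The estimate concerns only $N$-dimensional pieces, so we first fix an arbitrary $N$-dimensional subspace $E\subset S_{p}^{(q)}$ and prove the inequality for $N=\dim E$. This bounds $\dim S_{p}^{(q)}$, because the left-hand side grows like a positive power of $N$ when $q>2$; that is, $\tilde q<2$, and then $\Gamma(\frac{N+\tilde q}{2})/\Gamma(\frac N2)\sim (N/2)^{\tilde q/2}$ only up to the normalization. The closed graph theorem makes the inclusion $J_{p}^{(q)}$ bounded, so $\|f\|_{q}\le \|J\|\,\|f\|_{p}$ on $S_{p}^{(q)}$.

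The next step reduces everything to $L_{2}$ via the exponent recursion $q=2+(p-2)2^{m}$. Writing $q_{k}=2+(p-2)2^{k}$, we have $q_{k+1}=2q_{k}-2$. Hölder's inequality (log-convexity of $L_r$ norms on a probability space) then gives $\|f\|_{q_k}\le \|f\|_{2}^{\theta_k}\|f\|_{q_{k+1}}^{1-\theta_k}$ with explicit $\theta_k$. Iterating this $m$ times, together with the boundedness of $J$, should produce a constant $K_{p,q(m)}(S)$ such that, on $E$,
\begin{equation*}
\|f\|_{q}\le K_{p,q(m)}(S)\,\|f\|_{2}.
\end{equation*}
When $p<2$ one instead compares $\|f\|_p\le\|f\|_2$ directly. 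Dualizing on $E$ then gives the bound $\|f\|_{2}^{2}=\langle f,f\rangle\le \|f\|_{q}\|f\|_{\tilde q}\le K\|f\|_2\|f\|_{\tilde q}$, i.e.\ $\|f\|_{2}\le K_{p,q(m)}(S)\|f\|_{\tilde q}$ for $f\in E$. This is the inequality (\ref{S2d}) invoked above for $\varphi=r$.

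For the averaging step, choose an $L_{2}$-orthonormal basis $\varphi_{1},\dots,\varphi_{N}$ of $E$ and put $\varphi=(\varphi_1,\dots,\varphi_N)$. For $\xi\in\mathbb{S}^{N-1}$ the function $\langle\xi|\varphi\rangle$ has unit $L_2$ norm, so $K^{-\tilde q}\le\|\langle\xi|\varphi\rangle\|_{\tilde q}^{\tilde q}$. We integrate this over $\xi$ with the normalized surface measure $d\sigma$ and apply Fubini. Pointwise in $x\in M$, the rotation-invariant formula $\int_{\mathbb{S}^{N-1}}|\langle\xi|v\rangle|^{\tilde q}d\sigma(\xi)=|v|^{\tilde q}\,c_{N,\tilde q}$ holds with
\begin{equation*}
c_{N,\tilde q}=\frac{\Gamma(\frac{\tilde q+1}{2})\Gamma(\frac N2)}{\sqrt\pi\,\Gamma(\frac{N+\tilde q}{2})}.
\end{equation*}
Thus $K^{-\tilde q}\le c_{N,\tilde q}\int_M|\varphi(x)|^{\tilde q}d\mu$. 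Since $\tilde q<2$, Jensen's inequality on the probability space gives $\int_M|\varphi|^{\tilde q}d\mu\le(\int_M|\varphi|^2d\mu)^{\tilde q/2}=N^{\tilde q/2}$. Combining these yields $\frac{\sqrt\pi\,\Gamma(\frac{N+\tilde q}{2})}{N^{\tilde q/2}\Gamma(\frac{\tilde q+1}{2})\Gamma(\frac N2)}\le K_{p,q(m)}(S)^{\tilde q}$, which is the claim.

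The main obstacle is the second step: producing a genuine constant in $\|f\|_q\le K\|f\|_2$ on $E$ from the interpolation chain, particularly for $p>2$, where $\|f\|_2\le\|f\|_p$ points the wrong way. There I would first try to absorb the $L_p$ factor using $\|f\|_p\le\|f\|_2^{\alpha}\|f\|_q^{1-\alpha}$ and then divide by $\|f\|_q^{1-\alpha}$. The sphere computation and Jensen's inequality are routine by comparison.
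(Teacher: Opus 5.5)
Your proof of the stated inequality is correct and follows the paper's route. The steps are the same: closed graph for $\|f\|_q\le K_{p,q}(S)\|f\|_p$; absorption via $\|f\|_p\le\|f\|_2^{\alpha}\|f\|_q^{1-\alpha}$ to get $\|f\|_q\le K_{p,q(m)}(S)\|f\|_2$ (Lemma~\ref{Lm_2}); duality to get $\|f\|_2\le K_{p,q(m)}(S)\|f\|_{\tilde q}$ (the content of (\ref{S2b})--(\ref{S2d})); then an $L_2$-orthonormal basis, averaging over $\mathbb{S}^{N-1}$, and $\|\,|\varphi|_N\|_{\tilde q}\le N^{1/2}$. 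Your averaging step integrates the inequality $K^{-\tilde q}\le\|\langle\xi|\varphi\rangle\|_{\tilde q}^{\tilde q}$ in $\xi$, and this is the rigorous version of (\ref{S3a})--(\ref{A0}). There the paper pulls the $\xi$-dependent quantity $|a|_N=\|\langle\xi|\varphi\rangle_N\|_{\tilde q}^{-1}$ out of the sphere integral and writes an equality for it.

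However, the claim in your first paragraph is false: the left-hand side does not grow like a power of $N$, so the inequality does not bound $\dim S_p^{(q)}$. Your own asymptotics show this. Since $\Gamma(\frac{N+\tilde q}{2})/\Gamma(\frac N2)\sim(N/2)^{\tilde q/2}$, the left-hand side equals $1$ at $N=1$ and converges to the finite number $\sqrt{\pi}/\bigl(2^{\tilde q/2}\Gamma(\frac{\tilde q+1}{2})\bigr)$, the reciprocal of the $\tilde q$-th absolute moment of a standard Gaussian. So the inequality holds on every finite-dimensional section but gives no bound on $\dim S_p^{(q)}$ in general. A counterexample is the closed span of the Rademacher functions: by Khintchine's inequality it is an infinite-dimensional closed subspace of $L_p(0,1)$ lying in every $L_q$. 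Drop that sentence; the section-wise argument is all the statement needs. Note also that $q=2+(p-2)2^m>p$ forces $p>2$, so your separate $p<2$ branch is vacuous.
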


Let us consider a closed linear subspace $S_{p}^{(q)}\subset L_{p}(M,d\mu )$
of the $\mathbb{ }$ Banach space $L_{p}(M,d\mu ),$ $p>1(\neq 2),$ with
respect to a probability measure on $M,$ satisfying, in addition, the
identical inclusion condition $S_{p}^{(q)}\subset \left( L_{p}(M,d\mu
);||\cdot ||_{p}\right) \hookrightarrow $ $\left( L_{q}(M,d\mu );||\cdot
||_{q}\right) \ $ for $q>p>1(\neq 2).$ In order to state Theorem \ref{Tm_2}
 we need some two lemmas.

\begin{lemma}
	\label{Lm_1} For any $q>p>1,$ there exists a bounded positive constant $%
	K_{p,q}(S)>1,$ such that 
	\begin{equation}
		||f||_{q}\leq K_{p,q}(S)\text{ }||f||_{p}  \label{S0}
	\end{equation}%
	for any $f\in S_{p}^{(q)}\subset L_{p}(M,d\mu )\hookrightarrow $ $%
	L_{q}(M,d\mu ).$
\end{lemma}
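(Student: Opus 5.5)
The inequality \eqref{S0} will follow from the closed graph theorem applied to the inclusion map
\[
J_{p}^{(q)}:S_{p}^{(q)}\rightarrow L_{q}(M,d\mu ),\qquad J_{p}^{(q)}f=f .
\]
Since $S_{p}^{(q)}$ is a closed linear subspace of the Banach space $L_{p}(M,d\mu )$, it is itself a Banach space under $||\cdot ||_{p}$. By hypothesis every $f\in S_{p}^{(q)}$ also lies in $L_{q}(M,d\mu )$, so $J_{p}^{(q)}$ is a well-defined linear map between Banach spaces. If we show that $J_{p}^{(q)}$ is bounded, then $K_{p,q}(S):=||J_{p}^{(q)}||$ satisfies \eqref{S0}.

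\textbf{Step 1: the graph of $J_{p}^{(q)}$ is closed.}
Let $f_{n}\rightarrow f$ in $L_{p}$ with $f_{n}\in S_{p}^{(q)}$, and let $f_{n}=J_{p}^{(q)}f_{n}\rightarrow g$ in $L_{q}$. We must show $g=f$ $\mu$-a.e.
\begin{itemize}
\item Convergence in $L_{p}$ implies convergence in $\mu$-measure, so there is a subsequence $f_{n_{k}}\rightarrow f$ $\mu$-a.e.
\item Convergence in $L_{q}$ also implies convergence in $\mu$-measure. Passing to a further subsequence, $f_{n_{k_{j}}}\rightarrow g$ $\mu$-a.e.
\end{itemize}
Hence $f=g$ $\mu$-a.e. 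As an alternative to the subsequence argument, on a probability space Hölder's inequality gives $||h||_{p}\leq ||h||_{q}$ for $q>p$. Therefore $f_{n}\rightarrow g$ in $L_{p}$ as well, and uniqueness of $L_{p}$-limits gives $f=g$ directly.

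\textbf{Step 2: boundedness and the constant.}
The closed graph theorem now yields $||f||_{q}\leq C||f||_{p}$ for all $f\in S_{p}^{(q)}$, with $C=||J_{p}^{(q)}||<\infty$. To get the strict inequality $K_{p,q}(S)>1$ stated in the lemma, we take $K_{p,q}(S):=\max \{C,1\}+1$ (or any number exceeding $C$ and $1$). This is harmless, since \eqref{S0} remains valid when the constant is increased.

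It is also worth noting that, for $f\neq 0$, Hölder's inequality on a probability space gives $||f||_{p}\leq ||f||_{q}$, so necessarily $C\geq 1$. Thus the normalization $K_{p,q}(S)\geq 1$ is natural and not an artifact.

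\textbf{Main obstacle.}
The only real point is identifying the two limits in the closed graph argument. This is settled cleanly by the inequality $||\cdot ||_{p}\leq ||\cdot ||_{q}$, which holds because $\mu$ is a probability measure. A second, minor point is that the closedness of $S_{p}^{(q)}$ in $L_{p}$ is essential: without completeness of the domain, the closed graph theorem does not apply. This hypothesis is, however, explicitly part of the setting.
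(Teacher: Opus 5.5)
Your proposal is correct and follows essentially the same route as the paper. The paper also applies the closed graph theorem to the inclusion $J_{p}^{(q)}$ and identifies the two limits via $\|g-f_{n}\|_{p}\leq \|g-J_{p}^{(q)}f_{n}\|_{q}$ on the probability space, exactly as in your alternative argument; your explicit enlargement of the constant to get $K_{p,q}(S)>1$ (needed, e.g., when $S$ is the constants, where $C=1$) is a detail the paper leaves implicit.
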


\begin{proof}
	As the linear subspace $S_{p}^{(q)}\subset L_{p}(M,d\mu ),$ embedded $%
	L_{q}(M,d\mu )$, $q>p>1$, is closed in $\ L_{p}(M,d\mu ),$ one can define
	the identity embedding mapping 
	\begin{equation}
		J_{p}^{(q)}:S_{p}^{(q)}\subset L_{p}(M,d\mu )\rightarrow L_{q}(M,d\mu ).
		\label{1a}
	\end{equation}%
	If a sequence $\{f_{n}:n\in \mathbb{N}\}$ $\subset S_{p}^{(q)}$ converges in 
	$S_{p}^{(q)}\subset L_{p}(M,d\mu )$ to an element 
	$f\rightarrow S_{p}^{(q)}\hookrightarrow $ $L_{p}(M,d\mu )$ with respect to the norm on 
	$L_{p}(M,d\mu )$ and simultaneously its image $\{J_{p}^{(q)}f_{n}:n\in 
	\mathbb{N}\}$ $\subset S_{p}^{(q)}\subset L_{q}(M,d\mu )$ converges to an
	element $g\in L_{q}(M,d\mu )\subset L_{p}(M,d\mu )$ with respect to the norm
	on$\ L_{q}(M,d\mu ),$ one can identify these limiting functions $f\sim g$ 
	almost everywhere. Really, since $(M,d\mu )\subset L_{p}(M,d\mu
	)\hookrightarrow L_{q}(M,d\mu ),q>p\ >1,$ from the estimations 
	\begin{equation}
		\begin{array}{c}
			||\ f-g||_{p}\leq ||f-f_{n}||_{p}+||g-f_{n}||_{p}\leq \\ 
			\leq ||f-f_{n}||_{p}+||\left( g-f_{n}\right) ||_{p}\leq \\ 
			\leq ||f-f_{n}||_{p}+||\left( g-J_{p}^{(q)}f_{n}\right) ||_{q}\ \overset{%
				n\rightarrow \infty }{\rightarrow }0%
		\end{array}
		\label{1b}
	\end{equation}%
	one obtains that $f\sim g$ almost everywhere and the image $J_{p}^{(q)}$ $%
	(S_{p}^{(q)})\subset L_{q}(M,d\mu )$ is closed in $L_{q}(M,d\mu ),q>p>1.$
	The latter, owing to the Banach closed graph theorem \cite%
	{Bana,BaLyMy,Lax,ReSi,Rudi}, gives rise to the existence of such a positive
	constant $K_{p,q}<\infty $ that 
	\begin{equation}
		||\ f||_{q}\leq K_{p,q}(S)\text{ }||f||_{p}  \label{2}
	\end{equation}%
	for arbitrary $f\in S_{p}^{(q)}\subset L_{p}(M,d\mu )\hookrightarrow
	L_{q}(M,d\mu ),q>p>1$. Remark also, that the following estimations 
	\begin{equation}
		||f||_{2}\leq ||\ f||_{q}=||J_{p}^{(q)}f||_{q}\leq K_{p,q}(S)\text{ }%
		||f||_{p}<\infty  \label{3}
	\end{equation}%
	hold for any $f\in S_{p}^{(q)}\subset L_{p}(M,d\mu )\ \hookrightarrow
	L_{q}(M,d\mu ),q>p>2,$ easily following from the Young inequality. 
\end{proof}

Taking into account Lemma \ref{Lm_1}, we can formulate the next lemma, which
is in some sense the converse to the inequality \ (\ref{S0}).

\begin{lemma}
	\label{Lm_2}There exists a constant $K_{p,q(m)}(S)>0,$ such that the
	following inequality 
	\begin{equation}
		||\ f||_{q}\leq K_{p,q(m)}(S) ||f||_{2}  \label{S1}
	\end{equation}%
	holds for $f\in S_{p}^{(q)}\hookrightarrow L_{q}(M;d\mu ),$ $%
	q=(p-2)2^{m}+2>p>1(\neq 2),\ $ and arbitrary natural $m\in \mathbb{N}.$
\end{lemma}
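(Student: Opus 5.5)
The plan is to combine Lemma \ref{Lm_1} with an interpolation (log-convexity) inequality for $L_r$-norms on the probability space $(M,d\mu)$, and then absorb the $\|f\|_p$ factor. First I will record that the hypothesis $q=2+(p-2)2^{m}>p$ with $m\in\mathbb{N}$ forces $p>2$. Indeed, if $1<p<2$, then $(p-2)2^{m}\leq p-2$ and hence $q\leq p$. So throughout we have $2<p<q$, and by Lemma \ref{Lm_1} every $f\in S_{p}^{(q)}$ lies in $L_q$ with $\|f\|_q\leq K_{p,q}(S)\|f\|_p$. The case $f=0$ is trivial, so assume $f\neq 0$.

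\emph{Main step.} Since $2<p<q$, I will choose $\theta\in(0,1)$ with $\frac1p=\frac{\theta}{2}+\frac{1-\theta}{q}$, that is,
\begin{equation*}
\theta=\frac{1/p-1/q}{1/2-1/q}.
\end{equation*}
H\"older's inequality applied to $|f|^{p}=|f|^{\theta p}|f|^{(1-\theta)p}$ with the conjugate exponents $\frac{2}{\theta p}$ and $\frac{q}{(1-\theta)p}$ gives
\begin{equation*}
\|f\|_p\leq \|f\|_2^{\theta}\,\|f\|_q^{1-\theta}\leq \|f\|_2^{\theta}\,K_{p,q}(S)^{1-\theta}\,\|f\|_p^{1-\theta}.
\end{equation*}
Dividing by $\|f\|_p^{1-\theta}$, which is finite and nonzero, yields $\|f\|_p\leq K_{p,q}(S)^{(1-\theta)/\theta}\|f\|_2$. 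Inserting this back into Lemma \ref{Lm_1} gives (\ref{S1}) with
\begin{equation*}
K_{p,q(m)}(S):=K_{p,q}(S)^{1/\theta}.
\end{equation*}

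\emph{Alternative route matching the exponent $q(m)$.} To exhibit the specific exponent $q(m)=2+(p-2)2^{m}$, I can instead iterate the Cauchy--Schwarz inequality. Writing $r_k=2+(p-2)2^k$, we have
\begin{equation*}
\int|f|^{r_k}\,d\mu=\int|f|\cdot|f|^{r_k-1}\,d\mu\leq\|f\|_2\Bigl(\int|f|^{2r_k-2}\,d\mu\Bigr)^{1/2},
\end{equation*}
and $2r_k-2=r_{k+1}$. Starting from $r_0=p$ and chaining $m$ times bounds $\|f\|_p^{p}$ by a product of a power of $\|f\|_2$ and a power of $\|f\|_{q(m)}$. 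This gives explicitly the same kind of interpolation exponent as $\theta$, after which the absorption argument is repeated verbatim.

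\emph{Main obstacle.} The only delicate point is the absorption step. It requires $\|f\|_p<\infty$ and $\theta>0$, which hold because $f\in L_p$ and $p<q$. It also requires identifying that the statement is meaningful only for $p>2$. For $p<2$ the reverse comparison $\|f\|_2\leq\|f\|_q$ from (\ref{3}) would have to be used instead, but then $q>p$ cannot hold with this formula, so that case does not arise.
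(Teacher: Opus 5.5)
Your proposal is correct and is essentially the paper's argument. The paper uses exactly your iterated Cauchy--Schwarz bound, in the form (\ref{S2}), $\|f\|_p\le\|f\|_q^{q/(2^m p)}\|f\|_2^{2(2^m-1)/(2^m p)}$, combines it with (\ref{S0}), and obtains the constant $K_{p,q}(S)^{\frac{(q-2)p}{2(q-p)}}$; this is your $K_{p,q}(S)^{1/\theta}$, since $\theta=\frac{2(q-p)}{p(q-2)}=\frac{2(2^m-1)}{2^m p}$ means your one-shot H\"older interpolation and the chained Cauchy--Schwarz give the same exponent. Your observation that $q-p=(p-2)(2^m-1)$ forces $p>2$ also shows that the paper's separate case $1<p\le 2$ is vacuous, and your explicit absorption step spells out what the paper leaves implicit.
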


\begin{proof}
	If $1<p\leq 2,$ from the Young inequality 
	\begin{equation}
		||f||_{p}\leq \ ||f||_{2}  \label{S1a}
	\end{equation}%
	for any $f\in S_{p}^{(q)}\subset L_{p}(M,d\mu )\hookrightarrow L_{q}(M,d\mu
	) $ one obtains \ inequality (\ref{S1}) $\ $for the bounded $\ $ $%
	K_{p,q}(S)>0. $\ If $p>2,$ then  one can make use of the following
	inequality: \ 
	\begin{equation}
		||f||_{p}\leq ||f||_{2^{m}(p-2)+2}^{\frac{\ 2^{m}(p-2)+2}{2^{m}p}}\text{ }%
		||f||_{2}^{\frac{2(2^{m}-1)\ }{2^{m}p}},  \label{S2}
	\end{equation}%
	which holds for any $f\in S_{p}^{(q)}\subset L_{p}(M,d\mu )$ \ and arbitrary
	natural $m\in \mathbb{N}.$ \ Now having put, by definition, $q=2+(p-2)2^{m}\
	>p>1(\neq 2),m\in \mathbb{N},$ the inequality (\ref{S2}) \ jointly with that
	of \ (\ref{S0}) gives rise to the searched estimation \ (\ref{S1}), \ where
	the constant $K_{p,q(m)}(S)=$ $K_{p,q}(S)^{\frac{(q-2)p}{2(q-p)}}$ $>0\ $\
	is bounded, thus proving the lemma.  
\end{proof}

\bigskip

\begin{proof}
	(\textit{Proof of Theorem \ref{Tm_2}). }Based on the lemmas above, one can
	proceed to proving Theorem \ref{Tm_2}. First we can observe that
	inequality (\ref{S1}) can be estimated, owing to the classical Young
	inequality, from the below as 
	\begin{equation}
		|l_{\varphi }(f)|\text{ }\leq K_{p,q(m)} ||f||_{2}  \label{S2a}
	\end{equation}%
	by means of a bounded linear functional $l_{\varphi }:(S_{p}^{(q)};||\cdot
	|_{p}|)$ $\rightarrow \mathbb{R}$ on the Banach subspace $%
	(S_{p}^{(q)};||\cdot ||_{q}),$ where $l_{\varphi }(f)=(\varphi
	|f):=\int_{M}\varphi fd\mu $ for some $\varphi \in (S_{p}^{(q)};||\cdot
	||_{q})^{\prime }\simeq $ $(S_{p}^{(q)};||\cdot ||_{\tilde{q}}),$ $1/\tilde{q%
	}+1/q=1,$ under the constraint $\ ||\varphi ||_{\tilde{q}}=1.$ Taking
	inequality \ (\ref{S2a}) and the evident embedding condition $%
	(S_{p}^{(q)};||\cdot ||_{2})\subset (S_{p}^{(q)};||\cdot ||_{\tilde{q}}),$
	one can calculate that 
	\begin{equation}
		\sup_{||f||_{2}\neq 0}\frac{|l_{\varphi }(f)|}{||f||_{2}}=||\varphi
		||_{2}\leq K_{p,q(m)}(S).  \label{S2b}
	\end{equation}%
	If \ now to choose an orthonormal basis $\ \Phi =\{\varphi _{1},\varphi
	_{2},...,\varphi _{N}\}\subset (S_{p}^{(q)};||\cdot ||_{2})\ $for some $N\in 
	\mathbb{N},$ $(\varphi _{j}|\varphi _{k})=$ $\int_{M}$ $\varphi _{j}\varphi
	_{k}d\mu =$ $\delta _{jk},||\varphi _{j}||_{2}=1,j,k=\overline{1,N},$ one
	can observe that a function $\varphi _{a}:=\langle a|\varphi \rangle _{N}=$ $%
	\sum_{j=1}^{N}a_{j}\varphi _{j}$ $\in $ $(S_{p}^{(q)};||\cdot ||_{2})$ has
	the norm 
	\begin{equation}
		||\varphi _{a}||_{2}=\left( \sum_{j=1}^{N}|a|^{2}\right) ^{1/2}=|a|_{N},
		\label{S2bb}
	\end{equation}%
	where the vector $\varphi :=(\varphi _{1},\varphi _{2},...,\varphi
	_{N})^{\intercal }\in \left( S_{p}^{(q)}\right) ^{N}$ and took $a\in \mathbb{%
		\ E}^{N},$ as an arbitrary vector. Having substituted the value of the norm
	\ ( \ref{S2bb}) into \ (\ref{S2b}), one obtains the inequality 
	\begin{equation}
		|a|_{N}\text{ }\leq K_{p,q(m)}(S),  \label{S3}
	\end{equation}%
	which should be combined with the imposed above condition $||\varphi _{a}||_{%
		\tilde{q}}=1.$ Taking into account that 
	\begin{equation}
		||\varphi _{a}||_{\tilde{q}}=||\langle a|\varphi \rangle _{N}||_{\tilde{q}%
		}=|a|_{N}||\langle \xi |\varphi \rangle _{N}||_{\tilde{q}}\text{ }=1,
		\label{S2c}
	\end{equation}%
	where $a\in \mathbb{E}^{N}\ $\ and $\xi :=a/|a|_{N}$ $\in \mathbb{S}%
	^{N-1},|\xi |_{N}=1,\ $ yielding jointly with \ (\ref{S3}) the inequality 
	\begin{equation}
		||\langle \xi |\varphi \rangle _{N}||_{\tilde{q}}^{-1}\leq K_{p,q(m)}(S),
		\label{S2d}
	\end{equation}%
	we can get rid of the spherical variables $\xi \in $ $\mathbb{S}^{N-1},$ if
	to apply \ to the norm equality (\ref{S2c} ) the averaging method \cite{John}
	over the unit sphere $\mathbb{S}^{N-1}. $Namely, by integrating it with
	respect to the spherical measure $d\omega _{N-1}(\xi ),\xi \in \mathbb{S}%
	^{N-1}:$ 
	\begin{equation}
		\begin{array}{c}
			|a|_{N}^{\tilde{q}}\int_{\mathbb{S}^{N-1}}d\omega _{N-1}(\xi
			)\int_{M}|\langle \xi |\varphi \rangle _{N}|^{\tilde{q}}d\mu = \\ 
			\\ 
			=|a|_{N}^{\tilde{q}}\text{ }||\text{ }|\varphi |_{N}||_{\tilde{q}}^{\tilde{q}%
			}\frac{2\sqrt{\pi ^{N-1}}\Gamma (\frac{\tilde{q}+1}{2})\Gamma (\frac{N}{2})}{%
				\Gamma (\frac{N+\tilde{q}}{2})}=\ \omega _{N-1},%
		\end{array}
		\label{S3a}
	\end{equation}%
	we can equivalently obtain from \ (\ref{S3a}) that  
	\begin{equation}
		\text{ }|a|_{N}=\frac{1}{||\text{ }|\varphi |_{N}||_{\tilde{q}}}\left( \frac{%
			\sqrt{\pi }\Gamma (\frac{N+\tilde{q}}{2})}{\ \Gamma (\frac{\tilde{q}+1}{2}%
			)\Gamma (\frac{N}{2})}\right) ^{1/\tilde{q}}  \label{A0}
	\end{equation}%
	where we denoted by $\omega _{N-1}=\frac{2\sqrt{\pi ^{N}}}{\Gamma (N/2)}$
	the surface of the $(N-1)-$ dimensional sphere $\mathbb{S}^{N-1}.$ $\ $Since
	the norm $||$ $|\varphi |_{N}||_{\tilde{q}}\leq ||$ $|\varphi
	|_{N}||_{2}=\left( \int_{M}\langle \varphi |\varphi \rangle _{N}d\mu \right)
	^{1/2}=N^{1/2},$ \ the equality \ (\ref{A0}) jointly with the condition \ (%
	\ref{S3}) yields the final \ numerical estimation 
	\begin{equation}
		\ \frac{1}{N}\left( \frac{\sqrt{\pi }\Gamma (\frac{N+\tilde{q}}{2})}{\Gamma (%
			\frac{\tilde{q}+1}{2})\Gamma (\frac{N}{2})}\right) ^{2/\tilde{q}}\leq
		K_{p,q(m)}(S)^{2},  \label{A0a}
	\end{equation}
	
	whose left hand side is bounded for those integers $N\in \mathbb{N},$ \
	which ensure the embedded subspace $S_{p}^{(q)}\subset L_{p}(M;d\mu
	)\hookrightarrow $ $L_{q}(M;d\mu ) $ at given $\tilde{q}=$ $q/(q-1),$  
	$q=2+(p-2)2^{m}\ $\ $>$ $p(\neq 2),\ m\in \mathbb{N},$ \ to be finite
	dimensional, that is $\ \mathrm{dim}$ $S_{p}^{(q)}=N<\infty .$
	
	Regarding the critical case $q=\infty $, since the representation (\ref{S2a}%
	) is not more acceptable, we need to consider that the linear bounded
	functional used there should be replaced by the following natural
	expression: 
	\begin{equation}
		|l_{x}(f)|\text{ }\leq K_{p,\infty }(S)||f||_{2}  \label{B1}
	\end{equation}%
	for any $f\in S_{p}^{(\infty )}\subset L_{\infty }(M;d\mu )\ $and $\ x\in M,$
	where the value $l_{x}(f):=f(x)\in \mathbb{R}.$ Having calculated the value%
	\begin{equation}
		\sup_{||f||_{2}\neq 0}\frac{|l_{x}(f)|}{||f||_{2}}=||l_{x}||\text{ }\leq
		K_{p,\infty }(S)\   \label{B2}
	\end{equation}%
	and using the Riesz representation theorem for the functional $%
	l_{x}:(S_{p}^{(\infty )};||\cdot ||_{2})\rightarrow \mathbb{R}$ on the
	Hilbert subspace $(S_{p}^{(\infty )};||\cdot ||_{2})\subset (L_{2}(M;d\mu
	);||\cdot ||_{2}),$ there exists for any $x\in M$ such a function $g_{x}\in $
	$(S_{p}^{(\infty )};||\cdot ||_{2})$ \ that $l_{x}(f)=(g_{x}|f)\ $and $%
	||l_{x}||$ $=||g_{x}||_{2}$ for all $f\in (S_{p}^{(\infty )};||\cdot
	||_{2}). $ If now $\Phi _{p}^{(\infty )}:=$ $\{\varphi _{1},\varphi
	_{2},...,\varphi _{N},...\}$ $\subset (S_{p}^{(\infty )};||\cdot ||_{2})\ $
	is a complete orthonormal set of functions, that is $||\varphi
	_{j}||_{2}=1,(\varphi _{j}|\varphi _{k})=$ $\int_{M}$ $\varphi _{j}\varphi
	_{k}d\mu =$ $\delta _{jk},j,k\in \mathbb{N},$ the related Parceval equality 
	\begin{equation}
		||g_{x}||_{2}^{2}=\sum_{j\in \mathbb{N}}|(g_{x}|\varphi
		_{j})|^{2}=\sum_{j\in \mathbb{N}}|\varphi _{j}(x)|^{2}  \label{B3}
	\end{equation}%
	combined with the inequality (\ref{B2}) yields the next one: 
	\begin{equation}
		\sum_{j\in \mathbb{N}}|\varphi _{j}(x)|^{2}\leq K_{p,\infty }^{2}(S),
		\label{B4}
	\end{equation}%
	which holds for any $x\in M.$ Having integrated the obtained inequality 
	( \ref{B4}) over the whole space $M,$ we obtain that 
	\begin{equation}
		\mathrm{card}\text{ }\Phi _{p}^{(\infty )}=N\leq K_{p,q}^{2}(S)  \label{B5}
	\end{equation}%
	for some $N=\dim S_{p}^{(\infty )},$ thus proving the theorem. 
\end{proof}

The last reasonings above, concerning the special case $q=\infty ,$ can be
reformulated as the following proposition.

\begin{proposition}
	Let a linear closed topological subspace $S_{p}^{(\infty )}\subset
	L_{p}(M;d\mu ),p>1(\neq 2),$ be embedded into a Banach space $%
	L_{\infty }(M;d\mu ),$ where $d\mu $ is a probability measure on $M.$
	\ Then $\ $ the dimension of the closed subspace $S_{p}^{(\infty )}\subset
	L_{p}(M,d\mu )\ $ proves to satisfy the inequality $\dim S_{p}^{(\infty
		)}=N\leq K_{p,\infty }(S)^{2}$  for some bounded constant $K_{p,\infty
	}>0.$
\end{proposition}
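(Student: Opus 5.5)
The plan is the classical Grothendieck argument: compare $\|\cdot\|_\infty$ with $\|\cdot\|_2$ on $S_p^{(\infty)}$, and then integrate a pointwise bound on an orthonormal system.

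\emph{Step 1: from $\|\cdot\|_p$ to $\|\cdot\|_2$.} By Lemma \ref{Lm_1} with $q=\infty$ (the closed graph argument goes through verbatim), there is $K=K_{p,\infty}(S)$ with $\|f\|_\infty\le K\|f\|_p$ for all $f\in S_p^{(\infty)}$.

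If $1<p\le 2$, then $\|f\|_p\le\|f\|_2$, because $\mu$ is a probability measure. This gives $\|f\|_\infty\le K\|f\|_2$ directly.

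If $p>2$, use the interpolation bound $\|f\|_p^p\le\|f\|_\infty^{p-2}\|f\|_2^2$. Combined with $\|f\|_\infty\le K\|f\|_p$, it gives $\|f\|_p^{2}\le K^{p-2}\|f\|_2^{2}$. Hence $\|f\|_\infty\le K^{p/2}\|f\|_2$. Renaming the constant, in all cases $\|f\|_\infty\le K_{p,\infty}(S)\|f\|_2$ on $S_p^{(\infty)}$.

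In particular, on $S_p^{(\infty)}$ the norms $\|\cdot\|_2$ and $\|\cdot\|_p$ are equivalent. So $S_p^{(\infty)}$ is a closed subspace of $L_2(M,d\mu)$, that is, a Hilbert space.

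\emph{Step 2: pointwise bound on an orthonormal family.} Take any finite orthonormal family $\varphi_1,\dots,\varphi_n$ in $S_p^{(\infty)}$ with respect to $(\cdot|\cdot)$. The key pointwise claim is
\begin{equation*}
\sum_{j=1}^n|\varphi_j(x)|^2\le K_{p,\infty}(S)^2\quad\text{for }\mu\text{-a.e. }x\in M.
\end{equation*}
This is the main obstacle, because point evaluations are not defined on $L_\infty$ classes, so the Riesz representer $g_x$ used above is only heuristic.

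To make it rigorous, I avoid points and work with the countable dense set $D$ of $a\in\mathbb{Q}^n$ with $|a|_n\le1$. For each such $a$, the function $\sum_j a_j\varphi_j$ has $\|\cdot\|_2=|a|_n\le 1$. By Step 1 it therefore satisfies $|\sum_j a_j\varphi_j(x)|\le K$ off a null set $E_a$.

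Off the null set $E=\bigcup_{a\in D}E_a$, take the supremum over $a\in D$. By density and continuity in $a$, this supremum equals $(\sum_j|\varphi_j(x)|^2)^{1/2}$, which yields the displayed bound.

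\emph{Step 3: integrate.} Integrating the bound of Step 2 against the probability measure $d\mu$ gives
\begin{equation*}
n=\sum_{j=1}^n\|\varphi_j\|_2^2\le K_{p,\infty}(S)^2.
\end{equation*}
Hence every orthonormal family in $S_p^{(\infty)}$ has cardinality at most $K_{p,\infty}(S)^2$. This forces $\dim S_p^{(\infty)}=N\le K_{p,\infty}(S)^2$; in particular the subspace is finite-dimensional, with the constant $K_{p,\infty}(S)$ as fixed in Step 1.
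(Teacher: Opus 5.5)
Your proof is correct and follows the paper's Grothendieck scheme: you bound $\|f\|_\infty$ by a constant times $\|f\|_2$ on $S_p^{(\infty)}$, deduce $\sum_j|\varphi_j|^2\le K_{p,\infty}(S)^2$ almost everywhere for an orthonormal system, and integrate against the probability measure $\mu$. The differences are only in rigor: your Step 1 derives the $\|\cdot\|_\infty$--$\|\cdot\|_2$ comparison, which the paper uses in this case without derivation (it derives it only in the $C([0,1])$ proposition), and your countable-dense-set argument over finite orthonormal families replaces the paper's Riesz representers $g_x$ of the point evaluations $f\mapsto f(x)$ (not well defined on $L_\infty$ classes) together with its Parseval identity over a complete orthonormal set.
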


Moreover, as a technical consequence of the results above the following
Grothendieck type \cite{Grot} proposition holds.

\begin{proposition}
	Let $S_{p}^{(c)}\subset C([0,1];\mathbb{R})$ be a closed subspace of the $%
	\mathbb{ }$ Banach space $L_{p}(0,1;d\mu ),p>1,$ with respect to a
	probability measure $d\mu $ on $[0,1].$ Then the subspace $%
	S_{p}^{(c)}\subset $ $C([0,1];\mathbb{R})$ is finite-dimensional.
\end{proposition}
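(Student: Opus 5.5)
The plan is to reduce the statement to the case $q=\infty$ settled by the preceding proposition, arguing directly with point evaluations, which is legitimate because the functions are continuous. I read the hypothesis as follows: $S_{p}^{(c)}$ is a linear subspace of $C([0,1];\mathbb{R})$ that is closed both for $\|\cdot\|_{\infty}$ (the sup norm over $[0,1]$) and for $\|\cdot\|_{p}$ taken with respect to $d\mu$. Making this reading precise is the main point of care; the rest of the argument is routine.

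First step: obtaining $\|f\|_{\infty}\le K\|f\|_{p}$ on $S_{p}^{(c)}$. Since $d\mu$ is a probability measure, $\|f\|_{p}\le\|f\|_{\infty}$. Hence the identity map $(S_{p}^{(c)},\|\cdot\|_{\infty})\to(S_{p}^{(c)},\|\cdot\|_{p})$ is a continuous linear bijection between Banach spaces. By the open mapping theorem (or the closed graph argument of Lemma \ref{Lm_1}) its inverse is bounded, so
\[
\|f\|_{\infty}\le K\,\|f\|_{p}\qquad (f\in S_{p}^{(c)}).
\]
In particular $\|f\|_{p}=0$ forces $f\equiv 0$ on all of $[0,1]$, not only on the support of $\mu$, so $(f|g)=\int fg\,d\mu$ is a genuine inner product on $S_{p}^{(c)}$.

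Second step: passing to the $L_{2}$-norm, in the spirit of Lemma \ref{Lm_2}.
\begin{itemize}
\item If $1<p\le 2$, then $\|f\|_{p}\le\|f\|_{2}$, so $\|f\|_{\infty}\le K\|f\|_{2}$.
\item If $p>2$, then $\|f\|_{p}^{p}\le\|f\|_{\infty}^{p-2}\|f\|_{2}^{2}$. Combining this with the first step gives $\|f\|_{\infty}^{p}\le K^{p}\|f\|_{\infty}^{p-2}\|f\|_{2}^{2}$, hence $\|f\|_{\infty}\le K^{p/2}\|f\|_{2}$.
\end{itemize}
In either case we get a constant $C$ with $|f(x)|\le C\|f\|_{2}$ for every $x\in[0,1]$ and every $f\in S_{p}^{(c)}$.

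Third step: the Grothendieck counting argument. Take any finite $\mu$-orthonormal family $\varphi_{1},\dots,\varphi_{n}\subset S_{p}^{(c)}$. Fix $x\in[0,1]$ and put $f=\sum_{j}a_{j}\varphi_{j}$ with $a_{j}=\varphi_{j}(x)$. Then
\[
\sum_{j}\varphi_{j}(x)^{2}=f(x)\le C\|f\|_{2}=C\Big(\sum_{j}\varphi_{j}(x)^{2}\Big)^{1/2},
\]
so $\sum_{j=1}^{n}\varphi_{j}(x)^{2}\le C^{2}$ for every $x$. Since this is a pointwise bound on continuous functions, it holds everywhere. Integrating against $d\mu$ and using $\|\varphi_{j}\|_{2}=1$ gives $n\le C^{2}$. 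Orthonormal families therefore have uniformly bounded cardinality, and Gram–Schmidt yields $\dim S_{p}^{(c)}\le C^{2}<\infty$.
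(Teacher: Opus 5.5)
Your proof is correct and follows essentially the same route as the paper's: the closed-graph/open-mapping bound $\|f\|_\infty\le K\|f\|_p$, the same two-case interpolation giving $\|f\|_\infty\le K^{p/2}\|f\|_2$, and then the pointwise Grothendieck bound $\sum_j\varphi_j(x)^2\le C^2$ integrated against $d\mu$. Your test function $f=\sum_j\varphi_j(x)\varphi_j$ is exactly the Riesz representer of point evaluation that the paper combines with Parseval over a complete orthonormal set. Working with finite orthonormal families, and stating explicitly that $\|\cdot\|_p$ must be a norm on $S_p^{(c)}$, makes the argument slightly cleaner than the paper's.
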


\begin{proof}
	As a closed subspace $S_{p}^{(c)}\subset L_{p}(0,1;d\mu )\hookrightarrow
	C([0,1],\mathbb{R})$ of continuous functions on the interval $[0,1]$ can be
	closely embedded into the Banach space $C([0,1],\mathbb{R}),$
	from the Banach closed mapping theorem \cite{Lax,Rudi} one derives the
	existence of such a constant $K_{q}>1$ that the corresponding embedding
	operator $J_{p}^{(c)}:S_{p}^{(c)}\subset L_{p}(0,1;d\mu )\hookrightarrow
	C([0,1],\mathbb{R})$ is bounded, that is 
	\begin{equation}
		||J_{p}^{(c)}f||_{\infty }\leq K_{p,c}(S)||f||_{p}  \label{S14}
	\end{equation}%
	for any $f\in S_{p}^{(c)}\subset (C([0,1];\mathbb{R}),||\cdot ||_{\infty }).$
	Remark now that if $1<p\leq 2,$ then the inequality $||f||_{p}\leq ||f||_{2}$
	holds for all $f\in S_{p}^{(c)}\subset C([0,1];\mathbb{R})\hookrightarrow
	L_{p}(0,1;d\mu ).$ If $p>2,$ we can observe that $|f|^{p}\leq ||f||_{\infty
	}^{p-2}|f|^{2}$ for any $f\in S_{p}^{(c)}\subset C([0,1];\mathbb{R}%
	)\hookrightarrow L_{p}(0,1;d\mu ),$ whence by integration over the interval $%
	[0,1]$ one easily obtains that $||f||_{p}\leq ||f||_{\infty }^{\frac{p-2}{p}%
	}||f||_{2}^{\frac{2}{p}}.$ Substituting the latter inequality into  
	(\ref{S14}),  we obtain that $\ ||f||_{\infty }\leq
	K_{q,c}(S)^{p/2}||f||_{2},$ what jointly with the evident inequality $%
	||f||_{2}\leq ||f||_{\infty }$ gives rise to the dual inequality 
	\begin{equation}
		||f||_{2}\leq ||f||_{\infty }\leq K_{p,c}(S)^{p/2}||f||_{2}  \label{S15}
	\end{equation}%
	for all $f\in S_{p}^{(c)}\subset L_{p}(0,1;d\mu )\hookrightarrow C([0,1];%
	\mathbb{R}).$ \ As above, define for any $t\in \lbrack 0,1]$ a bounded
	linear functional $l_{t}:$ $(S_{p}^{(\infty )};||\cdot ||_{2})$ $\rightarrow 
	\mathbb{R}$ on the Hilbert subspace $(S_{p}^{(c)};||\cdot ||_{2}),$ such $\ $
	that $l_{t}(f)=f(t)\in \mathbb{R},$ which allows owing to the Riesz theorem
	the representation $l_{t}(f)=(g_{t}|f)$ for all $f\in (S_{p}^{(\infty
		)};||\cdot ||_{2}),$ where $g_{t}\in $ $(S_{p}^{(\infty )};||\cdot ||_{2})$
	and \ $||l_{t}||$ $=||g_{t}||_{2}.$ If now $\Phi _{p}^{(c)}:=\{\varphi
	_{1},\varphi _{2},...,\varphi _{N},...\}$ $\subset (S_{p}^{(c)};||\cdot
	||_{2})\ $ is a complete orthonormal set of functions, that is $||\varphi
	_{j}||_{2}=1,(\varphi _{j}|\varphi _{k})=$ $\int_{M}$ $\varphi _{j}\varphi
	_{k}d\mu =$ $\delta _{jk},j,k\in \mathbb{N},$ the related Parceval equality 
	\begin{equation}
		||g_{t}||_{2}^{2}=\sum_{j\in \mathbb{N}}|(g_{t}|\varphi
		_{j})|^{2}=\sum_{j\in \mathbb{N}}|\varphi _{j}(t)|^{2}  \label{S15aa}
	\end{equation}%
	jointly with the inequality \ (\ref{B2}) gives rise to the inequality 
	\begin{equation}
		\sum_{j\in \mathbb{N}}|\varphi _{j}(t)|^{2}\leq K_{p,c}(S),  \label{S15ab}
	\end{equation}%
	which holds for any $t\in M.$ Integration of the obtained above inequality \
	(\ref{S15ab}) over the whole interval $[0,1]$ yields the constraint 
	\begin{equation}
		\mathrm{card}\text{ }\Phi _{p}^{(c)}=N\leq K_{p,c}(S)^{2}  \label{S15bb}
	\end{equation}%
	for some $N=\dim S_{p}^{(c)},$ thus proving the proposition. 
\end{proof}

\section{Conclusion}

We have closed linear subspaces $S_{p}^{(q)}$ of the 
Banach space $ (L_{p}(M,d\mu );||\cdot ||_{p}),p>1(\neq 2), $
allowing the embedding into the Banach space $(L_{q}(M,d\mu
);||\cdot ||_{q}),q>p>1(\neq 2),$ regarding a probability measure $d\mu $ on 
$M.$\ We derived the nuperical estimation \ $\ \frac{1}{N}\left( \frac{\sqrt{%
		\pi }\Gamma (\frac{N+\tilde{q}}{2})}{\Gamma (\frac{\tilde{q}+1}{2})\Gamma (%
	\frac{N}{2})}\right) ^{2/\tilde{q}}\leq K_{p,q(m)}(S),$ on the dimension $%
\dim S_{p}^{(q)}=N\in \mathbb{N}$ of a closed embedded subspaces 
$S_{p}^{(q)}\subset (L_{p}(M,d\mu );||\cdot ||_{p})\hookrightarrow
(L_{q}(M,d\mu );||\cdot ||_{q})$ $\ $into $(L_{q}(M,d\mu );||\cdot ||_{q}),$
if $q=2+(p-2)2^{m}$ $>$ $p>1(\neq 2),m\in \mathbb{N}.$  In case of the
space $M=[0,1]\subset \mathbb{R}_{+},$ endowed with an arbitrary probability
measure $d\mu ,$ we stated the Grothendieck type finite-dimensionality
result for a linear closed subspace $S_{p}^{(c)}\subset $ $\left(
L_{p}(0,1;d\mu );||\cdot ||_{p}\right) \hookrightarrow $ $\left( C([0,1];%
\mathbb{R});||\cdot ||_{\infty }\right) ,$ identical inclusion into the
Banach space $\left( C([0,1],\mathbb{R});||\cdot ||_{\infty }\right) .$ \ A
general question about estimation of the dimension of a linear closed
subspace $S_{p}^{(q)}\subset L_{p}(M,d\mu )\hookrightarrow $ $L_{q}(M,d\mu )$
for arbitrary $q>p>1$ looks to be still open and needs more sophisticated
techniques, mainly based on analysis of the complementary subspaces in $%
L_{p}(M,d\mu )$ and $L_{q}(M,d\mu ).$

\section{Acknowledgements}

The authors are much appreciated to participants of the Seminar at the
Department of Applied Mathematics, University of Agriculture in Krakow for
useful remarks, comments and suggestions. Special thanks belong to our
colleagues M. Ptak and A.K. Prykarpatski for permanent interest in our
research endeavor, fruitful discussions and instrumental help during
preparation the manuscript.

\section*{Declarations}


\begin{itemize}
	\item Funding: This research was financed from the subsidy of the 
	Ministry of Science and Higher Education for the Hugo Kołłątaj 
	Agricultural University in Kraków for the year 2026
	\item Conflict of interest/Competing interests: The authors declare that they have no known competing financial interests or personal relationships that could have appeared to influence the work reported in this paper.
	\item Ethics approval and consent to participate: Not applicable
	\item Consent for publication: Not applicable
	\item Data availability: Not applicable
	\item Materials availability: Not applicable
	\item Code availability: Not applicable
	\item Author contribution: The authors contributed equally to this work.
\end{itemize}

	\bibliography{manuscript-refs}

@ARTICLE{BaPr-2,
	author  = {Balinsky, A. A. and Prykarpatski, A. K.},
	title   = {On the finite dimensionality of closed subspaces in $L_p(M,d\mu)\cap L_q(M,d\mu)$},
	journal = {Axioms},
	volume  = {10},
	year    = {2022},
	pages   = {275}
}

@BOOK{Bana,
	author  = {Banach, S.},
	title   = {Théorie des opérations linéaires},
	journal = {PWN},
	volume  = {},
	year    = {1932},
	pages   = {}
}

@ARTICLE{BaLyMy,
	author  = {Banakh, T. and Lyantse, W. E. and Mykytyuk, Ya. V.},
	title   = {$\infty$-Convex sets and their applications to the proof of certain classical theorems of functional analysis},
	journal = {Matematychni Studii},
	volume  = {11},
	year    = {1999},
	pages   = {83-84}
}

@BOOK{BuBe,
	author  = {Butzer, P. L. and Berens, H.},
	title   = {Approximation},
	journal = {Springer-Verlag Berlin Heidelberg},
	volume  = {},
	year    = {1967},
	pages   = {}
}

@ARTICLE{FoSeTe,
	author  = {Foias, C. and Sell, G. R. and Temam, R.},
	title   = {Inertial manifolds for nonlinear evolution equations},
	journal = {Journal of Differential Equations},
	volume  = {73},
	year    = {1988},
	pages   = {309-353}
}

@ARTICLE{Grot,
	author  = {Grothendieck, A.},
	title   = {Sur certains sous-espaces vectoriels de $L_p$},
	journal = {Canadian Journal of Mathematics},
	volume  = {6},
	year    = {1954},
	pages   = {158-160}
}

@ARTICLE{Haag, 
	author = {Haagerup U.}, 
	title = {The best constants in Khintchine inequality}, 
	journal = {Studia Mathematica}, 
	volume = {LXX}, 
	year = {1982}, 
	pages = {232-283}
}

@BOOK{John,
	author  = {John, F.},
	title   = {Plane Waves and Spherical Means},
	journal = {Interscience Publishers},
	volume  = {},
	year    = {1955},
	pages   = {}
}

@ARTICLE{KaPe,
	author  = {Kadec, M. I. and Pe{\l}czy{\'n}ski, A.},
	title   = {Bases, lacunary sequences and complemented subspaces in the spaces $L_p$},
	journal = {Studia Mathematica},
	volume  = {21},
	year    = {1962},
	pages   = {161-176}
}

@ARTICLE{Kato,
	author  = {Kato, T.},
	title   = {Nonlinear evolution equations in Banach spaces},
	journal = {Proceedings of Symposia on Pure and Applied Mathematics},
	volume  = {45},
	year    = {1986},
	pages   = {9-23}
}

@BOOK{KrPeSe,
	author  = {Kre\u{\i}n, S. G. and Petun\={\i}n, Yu. I. and Sem\"enov, E. M.},
	title   = {Interpolation of Linear Operators},
	journal = {American Mathematical Society},
	volume  = {54},
	year    = {1982},
	pages   = {}
}

@ARTICLE{Lady-1,
	author  = {Ladyzhenskaya, O. A.},
	title   = {Finite-dimensionality of bounded invariant sets for Navier--Stokes systems and other dissipative systems},
	journal = {Zap. Nauchn. Sem. LOMI},
	volume  = {163},
	year    = {1987},
	pages   = {105-129}
}

@ARTICLE{Lady-2,
	author  = {Ladyzhenskaya, O. A.},
	title   = {Estimates of the fractal dimension and of the number of deterministic modes for invariant sets of dynamical systems},
	journal = {Zap. Nauchn. Sem. LOMI},
	volume  = {163},
	year    = {1987},
	pages   = {105-129}
}

@BOOK{Lax,
	author  = {Lax, P. D.},
	title   = {Functional Analysis},
	journal = {Wiley-Interscience},
	volume  = {},
	year    = {2002},
	pages   = {}
}

@ARTICLE{LySh,
	author  = {Lyubich, Yu. I. and Shatalova, O. A.},
	title   = {Isometric embeddings of finite-dimensional $l_p$-spaces over the quaternions},
	journal = {Algebra and Analysis},
	volume  = {16},
	year    = {2005},
	pages   = {9-24}
}

@ARTICLE{MiGr,
	author  = {Michalak, A. and Grala-Michalak, J.},
	title   = {On constructions of isometric embeddings of nonseparable $L_p$ spaces, $0<p\leq2$},
	journal = {Commentationes Mathematicae},
	volume  = {48},
	year    = {2008},
	pages   = {129-145}
}

@ARTICLE{Nino,
	author  = {Ninomiya, H.},
	title   = {Some remarks on inertial manifolds},
	journal = {Journal of Mathematics Kyoto University},
	volume  = {32},
	year    = {1992},
	pages   = {667-688}
}

@BOOK{Pisi,
	author  = {Pisier, G.},
	title   = {The Volume of Convex Bodies and Banach Space Geometry},
	journal = {Cambridge University Press},
	volume  = {},
	year    = {2010},
	pages   = {}
}

@ARTICLE{Plic,
	author  = {Plichko, A.},
	title   = {Rate of decay of the Bernstein numbers},
	journal = {Journal of Mathematical Physics, Analysis, Geometry},
	volume  = {9},
	year    = {2012},
	pages   = {1-14}
}

@ARTICLE{PrBl,
	author  = {Prykarpatsky, A. K. and Blackmore, D.},
	title   = {A solution set analysis of a nonlinear operator equation using a Leray--Schauder type fixed point approach},
	journal = {Topology},
	volume  = {48},
	year    = {2009},
	pages   = {182-185}
}

@BOOK{ReSi,
	author  = {Reed, M. and Simon, B.},
	title   = {Methods of Modern Mathematical Physics. Vol. 1: Functional Analysis},
	journal = {Academic Press},
	volume  = {1},
	year    = {1972},
	pages   = {}
}

@BOOK{Rudi,
	author  = {Rudin, W.},
	title   = {Functional Analysis},
	journal = {McGraw-Hill},
	volume  = {},
	year    = {1991},
	pages   = {}
}

@BOOK{Tao,
	author  = {Tao, T.},
	title   = {An Introduction to Measure Theory},
	journal = {American Mathematical Society},
	volume  = {126},
	year    = {2011},
	pages   = {}
}

@ARTICLE{Tema,
	author  = {Temam, R.},
	title   = {Infinite-dimensional dynamical systems in fluid mechanics},
	journal = {Proceedings of Symposia on Pure and Applied Mathematics},
	volume  = {45},
	year    = {1986},
	pages   = {431-445}
}

@ARTICLE{Nowa,
	author  = {Nowak, P. W.},
	title   = {On coarse embeddibility into $l_p$ spaces and conjecture of Dranishnikov},
	journal = {arXiv preprint},
	volume  = {},
	year    = {2004},
	pages   = {}
}
	
\end{document}